\documentclass[12pt]{amsart}

\newtheorem{theorem}{Theorem}[section]
\newtheorem{lemma}[theorem]{Lemma}
\newtheorem{conjecture}[theorem]{Conjecture}

\theoremstyle{remark}
\newtheorem{remark}[theorem]{Remark}
\newcommand{\bP}{\mathbb{P}}
\newcommand{\E}{Erd\H{o}s}
\newcommand\cE{\mathbb{E}}
\DeclareMathOperator{\supp}{supp}

\usepackage{mathtools}

\DeclarePairedDelimiter\floor{\lfloor}{\rfloor}

\numberwithin{equation}{section}

\begin{document}

\title[Maximal tail of sums of nonnegative i.i.d. random variables]{On maximal tail probability of sums of nonnegative, independent and identically distributed random variables}

\author[T. \L{}uczak]{Tomasz \L{u}czak}
\address{Adam Mickiewicz University,
Faculty of Mathematics and Computer Science, 
ul.~Umultowska 87,
61-614 Pozna\'n, Poland}
\email{tomasz@amu.edu.pl}
\thanks{The first author is partially supported by NCN grant 2012/06/A/ST1/00261.}

\author[K. Mieczkowska]{Katarzyna Mieczkowska}
\address{Adam Mickiewicz University,
Faculty of Mathematics and Computer Science, 
ul.~Umultowska 87,
61-614 Pozna\'n, Poland}
\email{kaska@amu.edu.pl}
\thanks{The second author is partially supported by NCN grant 2012/05/N/ST1/02773.}

\author[M. \v Sileikis]{Matas \v{S}ileikis}
\address{University of Oxford, Mathematical Institute, Woodstock Road, Oxford OX2 6GG, United Kingdom}
\email{matas.sileikis@gmail.com}

\date{December 14, 2015}

\begin{abstract}
We consider the problem of finding the optimal upper bound for the tail probability of a sum of
$k$ nonnegative, independent and identically distributed random variables with given mean~$x$.
For $k=1$ the answer is given by Markov's inequality and
for $k=2$ the solution was found by Hoeffding and Shrikhande in 1955.
We solve the problem for $k=3$ as well as for general $k$ and $x\leq1/(2k-1)$ by showing that it follows from the fractional version of an extremal graph theory problem 
of Erd\H{o}s on matchings in hypergraphs.
\end{abstract}

\maketitle

\section{Introduction}
The purpose of this paper is to consider the problem of finding, for $x, t \ge 0$, the quantity 
\begin{equation}\label{eq:suptail}
	\sup_{\mathbf{X}}{\bP(X_1+\ldots+X_k\geq t)},
\end{equation}
where the supremum is taken over all random vectors $\mathbf{X}=(X_1,\ldots,X_k)$ 
of nonnegative, independent and identically distributed (further \emph{i.i.d.}) random variables 
$X_i$ such that $\mathbb{E}(X_i) \le x$ for $i=1,\ldots,k$.

From now on we assume that $t=1$, since, writing 
\begin{equation*}
	m_k(x) = \sup_{\mathbf{X}}{\bP(X_1+\ldots+X_k\geq 1)},
\end{equation*}
by rescaling we get that \eqref{eq:suptail} is equal to $m_k(x/t)$.

For $x \ge 1/k$ the trivial solution $m_k(x) = 1$ is given by $X_i$'s which are identically equal to $x$. 
For $k=1$ and $x < 1$ the solution $m_1(x) = x$ is given by Markov's inequality and a zero-one random variable.
In the case of two variables the problem was solved by Hoeffding and Shrikhande~\cite{HS} who showed that

$$m_2(x)=
\begin{cases} 
2x-x^2 & \text{for}\quad x < 2/5; \\
4x^2 & \text{for}\quad 2/5 \leq x <  1/2;\\
	1, & \text{for}\quad x \ge 1/2.
\end{cases}$$

We conjecture that the following generalization of the above results holds. 

\begin{conjecture}\label{conjLMS}
For every positive integer $k$ and $x\geq 0$ we have
\begin{equation}\label{eq:LMS}
	m_k(x)=
	\begin{cases} 
		1-(1-x)^k & \text{for}\quad x < x_0 ,\\
		(kx)^k & \text{for}\quad  x_0 \le x < 1/k,\\
		1, & \text{for}\quad  x\geq 1/k,
	\end{cases}
\end{equation}
where 
$x_0(k)$ is the solution of $1-(1-x)^k = (kx)^k$.
\end{conjecture}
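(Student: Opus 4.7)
My plan is to prove Conjecture~\ref{conjLMS} by reducing the tail problem to a weighted extremal question on $k$-uniform hypergraphs and then invoking the fractional version of \E's matching conjecture that the abstract alludes to.

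The lower bound in~\eqref{eq:LMS} is exhibited by two explicit extremizers, both of mean~$x$: the Bernoulli distribution with $\bP(X_i=1)=x$, giving $\bP(X_1+\cdots+X_k\ge 1)=1-(1-x)^k$; and the balanced distribution with $\bP(X_i=1/k)=kx$, giving $\bP(X_1+\cdots+X_k\ge 1)=(kx)^k$, since the sum reaches $1$ if and only if every $X_i$ is nonzero. By the defining equation of $x_0$, the pointwise maximum of these two quantities is precisely the right-hand side of~\eqref{eq:LMS}, so only the matching upper bound remains.

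For the upper bound, a weak-compactness and truncation argument reduces the problem to distributions supported on rationals with a common denominator $N$. Writing $\bP(X=v/N)=p_v$ for $v\in\{0,1,\dots,N\}$, one has
\[
\bP(X_1+\cdots+X_k\ge 1)=\sum_{(v_1,\dots,v_k)\in A}p_{v_1}\cdots p_{v_k},\qquad A:=\{(v_1,\dots,v_k):v_1+\cdots+v_k\ge N\},
\]
subject to $\sum_v v\,p_v\le xN$ and $\sum_v p_v=1$. Splitting each atom at $v/N$ into many identical copies of equal weight converts the question into the following extremal problem on a $k$-uniform hypergraph $H$: given nonnegative vertex values $y_u$ that form a fractional vertex cover of $H$ (that is, $\sum_{u\in e}y_u\ge 1$ for every edge $e$) together with vertex weights $w_u\ge 0$ summing to $1$ and satisfying $\sum_u w_u y_u\le x$, maximise $\sum_{e\in H}\prod_{u\in e} w_u$.

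The two conjectured extremizers match the two classical extremal hypergraphs in \E's matching problem: the star of all edges through a fixed vertex (recovering the Bernoulli extremizer and the $1-(1-x)^k$ regime) and the complete hypergraph on $\lceil 1/x\rceil-1$ vertices (recovering the balanced extremizer and the $(kx)^k$ regime). The main obstacle for $k\ge 3$ is that the required fractional form of \E's matching conjecture is itself open in general; it is known for $k=3$ and, for all $k$, in the regime in which the star is forced to be the extremal configuration, which translates on the probability side to $x\le 1/(2k-1)$. This accounts for the scope of the results announced in the abstract, and an unconditional proof of Conjecture~\ref{conjLMS} appears to require the full fractional \E{} matching conjecture.
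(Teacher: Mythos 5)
Your proposal follows essentially the same route as the paper: the two explicit extremizers for the lower bound, a truncation/discretization step (the paper's Lemma~\ref{lm1}), a blow-up of the atoms into a $k$-uniform hypergraph whose value function is a fractional vertex cover of size at most $x|V_n|$ (the paper's Lemma~\ref{lm2}), and an appeal to the fractional \E{} matching conjecture, known for $k=3$ and for $x<1/(2k-1)$. Like the paper, this yields only Theorem~\ref{thmLMS} rather than the full conjecture, which both you and the authors leave conditional on Conjecture~\ref{conjfrac}; the only loose ends in your sketch are minor (you rationalize the values rather than the probabilities, so an extra equal-weight splitting or continuity step is needed to pass from your weighted extremal problem to the unweighted edge-count form of Conjecture~\ref{conjfrac}, and the clique extremizer should live on roughly $kx|V_n|$ vertices, not $\lceil 1/x\rceil-1$).
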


Note that the lower bound on $m_k(x)$ in the first case is given by $X_i$'s with a two-point distribution 
$	\bP(X_i=1)=1-\bP(X_i=0)=x$
and in the second case by $X_i$'s with distribution
$	\bP(X_i=1/k)=1-\bP(X_i=0)=kx .$

The case when $X_i$'s are not necessarily identically distributed has also been studied. Let
\begin{equation*}
	s_k(x) = \sup_{\mathbf X} \bP\left( X_1 + \dots + X_k \ge 1\right),
\end{equation*}
where the supremum is taken over all vectors of independent, nonnegative random variables with common mean $x$. Clearly, $m_k(x) \le s_k(x)$. In 1966 Samuels \cite{S66} formulated a conjecture on the least upper bound for the tail probability in terms of $\cE X_i, i = 1, \dots, k$, which are not necessarily equal. For simplicity, we state this conjecture in the case when means are equal.
\begin{conjecture}[Samuels \cite{S66}]\label{conj:Samuels}
For every positive integer $k$ and $x\geq 0$ we have
	\begin{equation}\label{eq:Samuels}
		s_k(x)=
		\begin{cases} 
			1 - \min_{t = 0}^{k-1} \left(1 - \frac{x}{1 - tx} \right)^{k-t} & \text{for}\quad  x < 1/k,\\
			1, & \text{for}\quad  x\geq 1/k,
		\end{cases}
	\end{equation}
\end{conjecture}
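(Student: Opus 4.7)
Since Samuels' conjecture has been open since 1966 and is established only for $k \le 4$, I would begin by laying out a structural plan rather than attempting a full case analysis.

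The \emph{first step} is to reduce the supremum to two-point distributions. The set of probability measures on $[0,\infty)$ with fixed mean $x$ is convex, with extreme points exactly the two-point distributions on $\{0,a\}$, so a Choquet-type argument lets me assume
\[
\bP(X_i = a_i) = x/a_i, \qquad \bP(X_i = 0) = 1 - x/a_i,
\]
for some $a_i \in [x,\infty)$. The target becomes the finite-dimensional maximum
\[
F(a_1,\dots,a_k) = \sum_{\substack{S \subseteq [k] \\ \sum_{i \in S} a_i \ge 1}} \prod_{i \in S} \frac{x}{a_i} \prod_{i \notin S}\left(1 - \frac{x}{a_i}\right)
\]
over $a_1,\dots,a_k \in [x,\infty)$.

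The \emph{second step} is to pin down the structure of the maximizer. I would try to prove that at the optimum the $a_i$'s take at most two distinct values: $a_i = x$ (so $X_i$ is deterministic) for some $t$ indices, and a common value $a_i = c$ for the remaining $k - t$. The natural route is to fix all but two coordinates and show the restricted function is quasi-convex, forcing the optimum to the boundary; iterating then yields the two-level structure. Granted this, a short computation shows that the best $c$ is $c = 1 - tx$ (so that a single nontrivial value among the last $k-t$ coordinates already forces the sum to be $\ge 1$), giving $\bP(\sum X_i \ge 1) = 1 - (1 - x/(1-tx))^{k-t}$; optimizing over $t$ recovers the formula in the conjecture.

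The \emph{main obstacle}, and the reason the conjecture is still open for $k \ge 5$, is the second step: $F$ is piecewise multilinear, with pieces indexed by the combinatorics of the family $\{S : \sum_{i\in S} a_i \ge 1\}$, and the required quasi-convexity fails across the hyperplanes $\sum_{i \in S} a_i = 1$. I would try to circumvent this by importing the framework used elsewhere in this paper, where the analogous i.i.d.\ problem is reduced to a fractional version of the \E{} matching conjecture. Each two-point distribution induces a weight vector on $[k]$, the up-set $\{S : \sum_{i \in S} a_i \ge 1\}$ plays the role of the hypergraph, and Samuels' bound has the shape of an LP dual bound for a fractional cover of this up-set. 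Establishing the corresponding sharp duality—a non-i.i.d.\ analog of the extremal hypergraph statement driving the main theorems here—is where I would concentrate effort.
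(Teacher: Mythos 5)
There is no proof in the paper to compare against: the statement you were given is Samuels' conjecture, which the paper records as an \emph{open} conjecture (verified by Samuels only for $k=3,4$, with partial information from \cite{AFHRRS}), and accordingly your submission is a research plan rather than a proof. The decisive gap is your second step. The claim that at the optimum the parameters $a_1,\dots,a_k$ split into $t$ coordinates equal to $x$ and $k-t$ coordinates equal to a common value $c=1-tx$ is essentially the whole content of \eqref{eq:Samuels}; the quasi-convexity mechanism you propose to force this structure is, as you yourself note, exactly what breaks down across the hyperplanes $\sum_{i\in S}a_i=1$ where $F$ changes its multilinear form, and no replacement argument is given. Saying this is ``where I would concentrate effort'' leaves the conjecture exactly as open as before. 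Your first step also needs more than the Choquet remark: extreme points of the mean-$x$ measures on $[0,\infty)$ are two-point laws, but not automatically with an atom at $0$; to reach support $\{0,a_i\}$ you must additionally use monotonicity of $u\mapsto\bP(\sum_{j\neq i}X_j\ge 1-u)$ in a coordinate-wise optimization, and you must address attainment, since the constraint set is not weakly compact (mass can escape as $a_i\to\infty$). These are repairable, but they are not in the text.

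The proposed bridge to the fractional \E{} matching problem also does not transfer from this paper. The equivalence in Lemma~\ref{lm2} uses identical distributions in an essential way: one weight function $w_n$ on a single vertex set, with the $k$ coordinates sampled independently and uniformly from it, which is precisely what makes the resulting random variables i.i.d. A non-identically-distributed analog would require a $k$-partite weighted structure with $k$ different cover functions, for which no sharp extremal statement parallel to Conjecture~\ref{conjfrac} (or to the results of Frankl and \L{}uczak--Mieczkowska quoted in Remark~\ref{thm:frac}) is available; moreover, the conjectured optimizers in \eqref{eq:Samuels} with $1\le t\le k-1$ mix deterministic and two-point variables and have no counterpart among the extremal families $Cov_{n,k}$ and $Cl_{n,k}$ of the matching problem. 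So the final step of your plan presupposes a new extremal theorem at least as hard as the probabilistic statement it is meant to prove.
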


The lower bound on $s_k(x)$ is given by one of the random vectors $\mathbf X_t, t=0, \cdots, k-1$, where $\mathbf X_t$ consists of~$t$ random variables identically equal to $x$ and $k-t$ i.i.d. random variables taking values $0$ and $1 - tx$. Therefore, if true, \eqref{eq:Samuels} implies Conjecture \ref{conjLMS} only when the minimum is attained by $t=0$. Computer-generated graphs suggest that the minimum is attained by $t=0$ when $x < x_1(k)$ and by $t= k - 1$ when $x \ge x_1(k)$, where $x_1(k) \in (0, 1/k)$ is the solution of $1-(1 - x)^k = x/ (1 - (k-1)x)$. In \cite{AFHRRS} it was shown rigorously that the minimum is attained by $t=0$ for $x \le 1/(k+1)$.

Samuels \cite{S66, S68} confirmed \eqref{eq:Samuels} for $k = 3, 4$. Computer-generated graphs of functions $s_3(x)$ and $s_4(x)$ suggest that for $k = 3,4$ we have
	\label{thm:Samuels}
	\begin{equation*}
		m_k(x)= s_k(x) = 1 - (1 -x)^k, \qquad x \le x_1(k),	
	\end{equation*}
	where $x_1(3) = 0.27729\dots$, $x_1(4) = 0.21737\dots$.

	Moreover, Samuels \cite{S69} proved that for $k \ge 5$
	\begin{equation*}
		m_k(x) = s_k(x) = 1 - (1-x)^k, \qquad x \le 1/(k^2-k).
	\end{equation*}

Our main result can be stated as follows.

\begin{theorem}\label{thmLMS}
	Conjecture~\ref{conjLMS} holds for $k=3$ and every $x$. Moreover, it holds for $k \ge 5$ when $x < \frac{1}{2k-1}$.
\end{theorem}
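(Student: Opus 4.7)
The plan is to translate the probabilistic question into an extremal problem on $k$-uniform hypergraphs and then to invoke known cases of the \E\ matching conjecture, with the fractional matching number acting as the bridge between the mean constraint and the edge count.

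First I would reduce to a finite combinatorial setting. By a standard approximation the distribution of $X$ may be taken to be finitely supported on rationals, and after passing to a common denominator I may further assume that, for a large integer $N$, $X$ is uniform on a multiset $\{u_1,\dots,u_N\}\subset[0,\infty)$ satisfying $\sum_j u_j\leq Nx$. Let $H$ be the $k$-uniform hypergraph on $[N]$ whose edges are the $k$-subsets $\{j_1,\dots,j_k\}$ with $u_{j_1}+\dots+u_{j_k}\geq 1$. Counting bad ordered $k$-tuples in $[N]^k$ and discarding the $O(N^{k-1})$ tuples with repeated coordinates gives
\begin{equation*}
\bP(X_1+\dots+X_k\geq 1)=\frac{k!\,|E(H)|}{N^k}+O(1/N).
\end{equation*}

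The key observation is that the values themselves form a fractional vertex cover of $H$: setting $\tau_j:=u_j$, every edge $e\in E(H)$ satisfies $\sum_{j\in e}\tau_j\geq 1$ by construction, so LP duality yields
\begin{equation*}
\nu^*(H)=\tau^*(H)\leq\sum_{j=1}^{N}u_j\leq Nx,
\end{equation*}
and in particular $\nu(H)\leq\lfloor Nx\rfloor$. I would then invoke the \E\ matching conjecture, which asserts that a $k$-uniform hypergraph on $n$ vertices with $\nu(H)\leq s$ satisfies $|E(H)|\leq\max\{\binom{(s+1)k-1}{k},\binom{n}{k}-\binom{n-s}{k}\}$. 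This is known in full for $k=3$ (by \L{}uczak--Mieczkowska) and for $k\geq 5$ in the range $n\geq(2k-1)s+k$ by a theorem of Frankl. Plugging in $n=N$, $s=\lfloor Nx\rfloor$ and letting $N\to\infty$, the right-hand side tends to $\max\{(kx)^k,\,1-(1-x)^k\}$, which matches the desired upper bound for $m_k(x)$; the first term dominates for $x\in[x_0(k),1/k)$ and the second for $x<x_0(k)$. The lower bounds are provided by the explicit constructions noted right after Conjecture~\ref{conjLMS}, and the case $x\geq 1/k$ is trivial.

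The main obstacle is verifying Frankl's hypothesis in the $k\geq 5$ case: the condition $N\geq(2k-1)\lfloor Nx\rfloor+k$ is eventually satisfied as $N\to\infty$ precisely when $x<1/(2k-1)$ strictly, which is exactly the assumption in the theorem and also explains why the general-$k$ half of the result stops at the threshold $1/(2k-1)$. A secondary, more routine technical point is the control of diagonal contributions when passing from ordered $k$-tuples to $k$-subsets and the initial reduction from an arbitrary distribution of mean $\leq x$ to a uniform distribution on a multiset of prescribed average.
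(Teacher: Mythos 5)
Your proposal is correct and follows essentially the same route as the paper: reduce to finitely supported rational (hence uniform-on-a-multiset) distributions, encode the values as a fractional vertex cover of the ``heavy $k$-sets'' hypergraph, bound $\nu^*$ (hence $\nu$) by LP duality, and invoke the known cases of \E's matching conjecture due to \L{}uczak--Mieczkowska/Frankl, with the same $O(N^{k-1})$ diagonal correction. The only difference is packaging: the paper factors the argument through the fractional matching conjecture (Conjecture~\ref{conjfrac}) as an intermediate statement, while you apply the integral conjecture directly via $\nu\le\nu^*$, which is exactly how the paper's Remark~\ref{thm:frac} is obtained anyway (and capping values at $1$ disposes of the minor point that a fractional cover must take values in $[0,1]$).
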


The proof of Theorem~\ref{thmLMS} is based on an observation that 
Conjecture~\ref{conjLMS} is asymptotically equivalent 
to the fractional version of Erd\H{o}s' Conjecture 
on matchings in hypergraphs which we introduce in the next section.

\section{The hypergraph matching problem}\label{sec2}

A {\em $k$-uniform hypergraph}  $H=(V,E)$  
is a~set of vertices $V$ 
together with a~family $E$ of 
$k$-element subsets of $V$, called \emph{edges}.
A {\em matching} is a~family of disjoint edges of $H$, and the size 
of the largest matching in $H$ is called a~{\em matching number} and is denoted by $\nu(H)$. 

In~\cite{E} \E\ stated the following.

\begin{conjecture}[Erd\H{o}s~\cite{E}]\label{conjerd}
Let $H=(V,E)$ be a~$k$-uniform hypergraph, $|V|=n$, $\nu(H) = s$. 
If $n\geq ks+k-1$, then 
\begin{equation}\label{eq:erdos_conj}
|E|\leq \max\bigg\{\binom nk-\binom{n-s}k\,,\binom {ks+k-1}k\bigg\}.
\end{equation}
\end{conjecture}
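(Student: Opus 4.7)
The plan is to proceed by induction on the matching number $s$, after first applying shifting (compression) to reduce to a structurally tractable class of hypergraphs. Since the shift operator $S_{ij}$ (which replaces each edge $e$ with $j \in e$, $i \notin e$ by $(e \setminus \{j\}) \cup \{i\}$ whenever the latter is not already an edge) preserves $|E|$ and does not increase $\nu$, iterating shifts over all pairs $i<j$ I may assume without loss of generality that $H$ is left-compressed. This gives strong monotonicity: if $e$ is an edge and $e'$ is obtained from $e$ by replacing some element with a smaller one, then $e'$ is also an edge.

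Next, I would set up the base cases. For $s=0$ the hypergraph is empty, and for $s=1$, $H$ is intersecting; here the claimed bound is $\max\{\binom{n-1}{k-1},\binom{2k-1}{k}\}$, and the first term is what is delivered by the Erd\H{o}s--Ko--Rado theorem for $n\ge 2k$, while for $n=2k-1$ the entire complete $k$-graph is intersecting and the second term is trivially tight. For the inductive step, assume the conjecture holds for all matching numbers strictly less than $s$. Pick a vertex $v$ (naturally $v=n$ in the shifted ordering) and decompose $E$ into the \emph{link} $L_v=\{e\setminus\{v\}: v\in e\in E\}$, a $(k-1)$-uniform hypergraph on $n-1$ vertices, and the \emph{residual} $R_v=\{e\in E: v\notin e\}$, a $k$-uniform hypergraph on $n-1$ vertices. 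One must have $\nu(R_v)\le s$, and a more careful argument using the left-compressed structure should give $\nu(L_v)\le s-1$ (otherwise a matching of size $s-1$ in $L_v$ disjoint from $v$ combines with $v$-edges to force $\nu(H)\ge s+1$), enabling the inductive hypothesis to be applied to $R_v$ and to $L_v$ (viewed as a $(k-1)$-uniform problem).

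The remaining step is to combine the bounds on $|R_v|$ and $|L_v|$. If both the inductive bounds are of ``star'' type, then $|R_v|\le\binom{n-1}{k}-\binom{n-1-s}{k}$ and $|L_v|\le\binom{n-1}{k-1}-\binom{n-s}{k-1}$ add up exactly to $\binom{n}{k}-\binom{n-s}{k}$, closing the induction in that regime. If one or both are of ``clique'' type, a finer structural argument using left-compression is required to show that the hypergraph is confined to an initial segment of vertices.

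The hard part, and the reason the conjecture is still open in general, is precisely the transition between the two extremal regimes near $n\approx ck s$: the inductive bounds on $R_v$ and $L_v$ need not be of the same type, and the sum of a ``clique'' term and a ``star'' term can exceed the maximum on the right-hand side of \eqref{eq:erdos_conj}. Handling this boundary demands either a sharper analysis of shifted hypergraphs whose link attains the clique bound (forcing $H$ itself to be close to a clique), or an alternative argument based on the fractional matching relaxation hinted at in the paper, where the extremal fractional matchings at the two configurations can be compared via an LP-duality or entropy argument.
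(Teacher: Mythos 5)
You are attempting to prove a statement that the paper does not prove: Conjecture~\ref{conjerd} is stated as an open conjecture of Erd\H{o}s, and the paper only \emph{cites} partial results (the case $k=3$ from \cite{LM,Fnew} and the case $s\le (n-k)/(2k-1)$ from \cite{Fgen}), which is exactly why Theorem~\ref{thmLMS} is restricted to $k=3$ and to $x<1/(2k-1)$. So there is no proof in the paper to compare against, and a complete argument along your lines would be a substantial new result rather than a reconstruction. Your proposal is, by your own admission in the last paragraph, not a proof: you explicitly leave open the combination of the link and residual bounds in the crossover regime $n\approx ks$, and that regime is precisely where the difficulty of the conjecture lives. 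A sketch that reduces the problem to ``the hard part which is why the conjecture is still open'' has not closed the induction.

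Beyond the conceded gap, one intermediate claim is wrong as stated. You assert that for left-compressed $H$ with $\nu(H)=s$ and $v=n$ one has $\nu(L_v)\le s-1$, arguing that a large matching in the link combines with $v$-edges to give $\nu(H)\ge s+1$. But a matching $f_1,\dots,f_s$ in $L_v$ only certifies that $f_i\cup\{v\}\in E$ for each $i$; these $k$-sets pairwise intersect in $v$, and the $f_i$ themselves need not be edges of $H$, so no large matching in $H$ results. Indeed the conjectured extremal example refutes the claim: for $H\in Cov_{n,k}(s)$ with $S=\{1,\dots,s\}$, the link of the last vertex is the $(k-1)$-uniform family of all sets meeting $S$, whose matching number is $s$, not $s-1$. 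Consequently the inductive bookkeeping $|L_v|\le\binom{n-1}{k-1}-\binom{n-s}{k-1}$ is not available, and even the ``star plus star'' case of your induction does not close. The correct control of the link of a shifted hypergraph with bounded matching number requires Frankl's characterization via the canonical matching, and even with it the known arguments only succeed for $s$ small relative to $n$ (or for $k=3$ after a lengthy stability analysis); your outline does not supply the missing ingredient.
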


Note that the equality in Conjecture~\ref{conjerd} holds either when $H$ is 
a~hypergraph consisting of
all $k$-element sets intersecting a~given subset $S\subset V$, $|S|=s$,
or when $H$ consists of all $k$-element subsets of a given subset $T\subset V$, $|T|=ks+k-1$.
We denote these two families of hypergraphs by $Cov_{n,k}(s)$ and $Cl_{n,k}(ks+k-1)$, respectively.

A similar problem can be formulated in terms of fractional matchings.
A~\emph{fractional matching} in a~hypergraph $H$ 
is a~function 
$$
\begin{aligned}
&w:E\rightarrow [0,1] \text{ such that }\\ 
\sum_{e\ni v}{w(e)}&\leq 1  \text{ for every vertex } v\in V.
\end{aligned}$$
Then, $\sum_{e\in E}{w(e)}$ is a~{\em size} of the matching $w$ and 
a~size of the largest fractional matching in $H$, denoted by $\nu^*(H)$,
is a~{\em fractional matching number}. 
Alon, Frankl, Huang, R\"odl, Ruci\'nski and Sudakov~\cite{AFHRRS} stated the following conjecture.

\begin{conjecture}[\cite{AFHRRS}]\label{conjfrac}
	Let $x \in [0,1/k]$ be fixed and let $H_n=(V_n,E_n)$ be a sequence of $k$-uniform hypergraphs such that $ \nu^*(H_n) \le x|V_n|$.
Then
\begin{equation}\label{eq:limsup}
	\limsup_{n\to \infty} \frac{ |E_n|}{{|V_n| \choose k}} \le \max\left\{1-(1-x)^k, {(kx)^k}\right\}.
\end{equation}
\end{conjecture}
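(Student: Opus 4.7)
My plan is to deduce Conjecture~\ref{conjfrac} from the integer Erdős matching Conjecture~\ref{conjerd} via the elementary inequality $\nu(H)\le\nu^*(H)$. Every integer matching is a fractional matching with 0/1 weights, so this inequality always holds. Hence if $\nu^*(H_n)\le xn$ then $s_n:=\nu(H_n)\le\lfloor xn\rfloor$, and for $n$ large (using $x<1/k$) the hypothesis $n\ge ks_n+k-1$ of Conjecture~\ref{conjerd} is satisfied. Granting that conjecture, one obtains
$$|E_n|\le\max\left\{\binom{n}{k}-\binom{n-s_n}{k},\binom{ks_n+k-1}{k}\right\}.$$
Both terms on the right are monotone increasing in $s_n$, so I would replace $s_n$ by $\lfloor xn\rfloor$ and let $n\to\infty$; the asymptotics $\binom{n-\lfloor xn\rfloor}{k}/\binom{n}{k}\to(1-x)^k$ and $\binom{k\lfloor xn\rfloor+k-1}{k}/\binom{n}{k}\to(kx)^k$ deliver exactly~\eqref{eq:limsup}.

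The principal obstacle is that Conjecture~\ref{conjerd} is itself open. The above reduction therefore establishes Conjecture~\ref{conjfrac} only in those regimes in which the integer version is known, namely $k=3$ and certain ranges of $s/n$ for larger $k$; this is precisely the scope reflected in Theorem~\ref{thmLMS}.

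As a complementary perspective, useful when only the $1-(1-x)^k$ branch of the max is active (i.e.\ for small $x$), I would pass to the LP dual: $\nu^*(H_n)\le xn$ produces a fractional vertex cover $y\colon V_n\to[0,1]$ with $\sum_v y_v\le xn$ and $\sum_{v\in e}y_v\ge 1$ for every $e\in E_n$. Then $|E_n|$ is bounded by the number of $k$-subsets $S$ with $\sum_{v\in S}y_v\ge 1$, and after normalizing by $\binom{n}{k}$ this count approaches $\bP(Y_1+\dots+Y_k\ge 1)$, with the $Y_i$ i.i.d.\ from the empirical distribution of $(y_v)$, which has mean at most $x$. Thus the fractional Erdős conjecture and the probabilistic Conjecture~\ref{conjLMS} are asymptotically equivalent, and partial progress on either formulation transfers immediately to the other; in particular, for small $x$ one can bypass the integer Erdős problem altogether by attacking the probabilistic side directly.
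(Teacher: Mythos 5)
Your reduction is exactly the paper's: Conjecture~\ref{conjfrac} is deduced from Conjecture~\ref{conjerd} via $\nu(H)\le\nu^*(H)$, so it is established only where the integer conjecture is known ($k=3$; $k\ge 4$ with $x<1/(2k-1)$), which is precisely the content of Remark~\ref{thm:frac}, and your correct observation that the statement remains open in general matches the paper's treatment of it as a conjecture. Your complementary LP-duality link to Conjecture~\ref{conjLMS} is likewise the paper's Lemma~\ref{lm2}, so nothing in your proposal departs from the paper's route.
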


Finding a~fractional matching number 
is a~linear programming problem. 
Its dual problem is to minimize the size of a \emph{fractional vertex cover} of $H$, which is defined as a~function 
\[
	\begin{aligned}
		&w: V \rightarrow[0, 1] \text{ such that }\\ 
		\text{ for each } &e\in E \text{ we have } \sum_{v\in e}{w(v)}\geq 1.
	\end{aligned}
\]
Then, $\sum_{v\in V}{w(v)}$ is the~\emph{size} of $w$ and 
the size of the smallest fractional vertex cover in $H$ is denoted by $\tau^*(H)$. 
By the Duality Theorem, 
\begin{equation}\label{eq:nutau}
	\nu^*(H)=\tau^*(H).
\end{equation}

The bound in Conjecture~\ref{conjfrac}, if true, is 
attained by either a sequence $H_n \in Cov_{n,k}(\floor{xn})$ (which has a fractional vertex cover $w(v)= \mathbf 1_{v \in S}$ of size $\floor{xn}$ and therefore satisfies $\nu^*(H_n) \le xn$) or $H_n \in Cl_{n,k}(\floor{kxn})$ (which has fractional vertex cover $w(v) = \frac 1 k \mathbf 1_{v \in T}$ of size $\floor{kxn}/k$).

Observe that if a fractional matching $w$ is such that $w(e)\in \{0,1\}$ for every edge~$e$, 
then $w$ is just a~matching or, more precisely, the indicator function of a~matching.
Thus, every integral matching is also a~fractional matching 
and hence
\begin{equation}\label{eq:nunu}
	\nu(H)\leq \nu^*(H),
\end{equation}
so consequently, Conjecture~\ref{conjfrac} follows from Conjecture~\ref{conjerd}. Furthermore,
Conjecture~\ref{conjerd} was confirmed for $k = 3$ by the first two authors \cite{LM} (for $n$ bigger than some absolute constant) and by Frankl \cite{Fnew} (for every $n$). 
Moreover, Frankl \cite{Fgen} confirmed Conjecture \ref{conjerd} for $k \ge 4$ and $s \le (n-k)/(2k-1)$. Therefore, in view of \eqref{eq:nunu} we have the following.
\begin{remark}[\cite{Fnew,Fgen,LM}]\label{thm:frac}
	Conjecture~\ref{conjfrac} holds for $k = 3$ and every $x$ as well as for $k \ge 4$ and $x < 1/(2k-1)$.
\end{remark}

\section{Proof of Theorem~\ref{thmLMS}}

We  prove Theorem~\ref{thmLMS} in two steps. 
First we observe that it is enough to confirm
Conjecture~\ref{conjLMS} with some additional restrictions.
Then we show the equivalence of Conjectures~\ref{conjLMS} and~\ref{conjfrac}.

Here and below, given a discrete random variable, $\supp(X)$ denotes the set of values which $X$ attains with positive probability.

\begin{lemma}\label{lm1}
	It suffices to prove Conjecture~\ref{conjLMS} for $X_i$'s with discrete distribution satisfying the following properties: (i) $\supp(X_i)$ is finite subset of $[0,1]$; (ii) $\bP\left( X_i = a \right) \in \mathbb Q$ for every $a \in \supp(X_i)$.
\end{lemma}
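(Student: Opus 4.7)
The plan is to show that, given any admissible i.i.d.\ vector $\mathbf X$ with $\cE X_i \le x$ and any $\varepsilon > 0$, one can construct an i.i.d.\ vector $\mathbf Z$ with $\cE Z_i \le x$ satisfying (i) and (ii) such that $\bP(Z_1 + \dots + Z_k \ge 1) \ge \bP(X_1 + \dots + X_k \ge 1) - \varepsilon$. This equates $\sup_{\mathbf X} \bP(\sum X_i \ge 1)$ with the supremum taken over nice vectors alone, which is what the lemma asserts. I would produce $\mathbf Z$ from $\mathbf X$ via three successive modifications: truncation into $[0,1]$, discretization onto the grid $\{0, 1/N, \dots, 1\}$ with a Bernoulli thinning to restore the mean, and rationalization of the atom weights.

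\emph{Truncation.} Setting $X_i' = \min(X_i, 1)$ preserves the i.i.d.\ structure and the mean bound, and $\bP(\sum X_i' \ge 1) \ge \bP(\sum X_i \ge 1)$: if $\sum_j X_j \ge 1$, then either every $X_j \le 1$ and the two sums coincide, or some $X_j > 1$, so that $X_j' = 1$ already forces $\sum_j X_j' \ge 1$.

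\emph{Discretization and thinning.} Fix a large integer $N$ and set $X_i^{(N)} = \lceil N X_i' \rceil / N \in \{0, 1/N, \dots, 1\}$. Since $X_i^{(N)} \ge X_i'$ the tail probability only grows, but $\cE X_i^{(N)} \le x + 1/N$ may overshoot. Let $B_1, \dots, B_k$ be i.i.d.\ Bernoulli variables, independent of all $X_i^{(N)}$, with common success probability $\alpha_N \in \mathbb{Q} \cap (0,\, x/(x+1/N)]$ chosen so that $\alpha_N \to 1$ (such rationals exist whenever $x > 0$; the case $x = 0$ is trivial). Put $Y_i = B_i X_i^{(N)}$; the $Y_i$ are i.i.d., $\supp(Y_i) \subseteq \{0, 1/N, \dots, 1\}$, $\cE Y_i = \alpha_N \cE X_i^{(N)} \le x$, and conditioning on $\{B_1 = \dots = B_k = 1\}$ yields
\[
	\bP\bigl(\textstyle\sum_i Y_i \ge 1\bigr) \ge \alpha_N^k\, \bP\bigl(\textstyle\sum_i X_i^{(N)} \ge 1\bigr) \ge \alpha_N^k\, \bP\bigl(\textstyle\sum_i X_i \ge 1\bigr).
\]
Taking $N$ large enough that $\alpha_N^k \ge 1 - \varepsilon/2$ limits the loss at this stage to $\varepsilon/2$.

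\emph{Rationalization.} Write $p_j = \bP(Y_i = j/N)$. For $j = 1, \dots, N$ choose rationals $q_j \in [0, p_j]$ with $p_j - q_j < \delta$ for a small $\delta > 0$, and set $q_0 = 1 - q_1 - \dots - q_N$; then $q_0 \in \mathbb Q$ and $q_0 \ge p_0 \ge 0$. Let $Z_i$ be i.i.d.\ with $\bP(Z_i = j/N) = q_j$: properties (i) and (ii) hold, and since $q_j \le p_j$ for $j \ge 1$ we get $\cE Z_i = \sum_{j \ge 1} (j/N) q_j \le \cE Y_i \le x$. A telescoping estimate $|\prod_i q_{j_i} - \prod_i p_{j_i}| \le \sum_i |q_{j_i} - p_{j_i}|$, summed over all $(N+1)^k$ tuples $\vec j \in \{0,\dots,N\}^k$, gives
\[
	\bigl|\bP(\textstyle\sum_i Z_i \ge 1) - \bP(\textstyle\sum_i Y_i \ge 1)\bigr| \le 2k(N+1)^{k-1}N\delta,
\]
which is at most $\varepsilon/2$ once $\delta$ is chosen small enough. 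Combining the three steps gives $\bP(\sum Z_i \ge 1) \ge \bP(\sum X_i \ge 1) - \varepsilon$, as required. The main subtlety is that discretizing upward and rationalizing downward each move only one of the two active constraints in the right direction; the Bernoulli thinning (with \emph{rational} $\alpha_N \to 1$) and the slack $q_0 \ge p_0$ are the devices that reconcile the mean bound and the tail bound simultaneously.
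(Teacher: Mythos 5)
Your construction is correct: the truncation, the upward rounding onto the grid $\{0,1/N,\dots,1\}$, the Bernoulli thinning with $\alpha_N\le x/(x+1/N)$, and the downward rationalization with the slack pushed onto the atom at $0$ all do what you claim, and the telescoping bound $2k(N+1)^{k-1}N\delta$ is valid, so you really do get that the supremum defining $m_k(x)$ is unchanged when restricted to distributions satisfying (i) and (ii). The overall skeleton matches the paper's proof (which also truncates, discretizes values via $\min\{\lceil mX_i\rceil/m,1\}$, and rounds the atom probabilities to rationals), but you handle the mean constraint differently, and this is the genuine point of divergence. The paper does not insist that the approximating variables satisfy $\mathbb{E}\le x$: it lets the mean drift up to $x+1/m$ (resp.\ $x+m/n$), applies the conjectured bound at the shifted mean, and then uses continuity of the explicit function $M(x)=\max\{1-(1-x)^k,(kx)^k,1\}$ to pass to the limit. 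You instead restore the constraint exactly, via the thinning factor $\alpha_N^k\to1$ and the downward rounding $q_j\le p_j$ on nonzero atoms, and compare tail probabilities directly, losing only $\varepsilon$. What your version buys is independence from the form of the bound: you prove the stronger, purely structural fact that nice distributions are dense in the optimization problem, which would work verbatim for any conjectured bound, continuous or not; the paper's version is shorter but leans on the specific continuity of $M$. Two minor remarks: the rationality of $\alpha_N$ is superfluous, since the final rationalization step overwrites the atom probabilities anyway; and your two reductions are performed in the opposite order from the paper's (values first, then probabilities, versus the paper's probabilities-then-values), which is immaterial.
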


\begin{proof}

Define, for $x \ge 0$, 
$$M(x)=\max\{1-(1-x)^k, (kx)^k, 1\},$$
which is equal to the right-hand side of \eqref{eq:LMS}.

Let us first assume that Conjecture~\ref{conjLMS} holds for random variables
satisfying (i) and (ii) and show it holds for $X_i$'s satisfying (i) only, that is, with $\supp(X_i)=\{a_1,\ldots,a_m\} \subset [0,1]$.
Let $p_j=\bP\left(X_i=a_j\right)$, $j=1,\ldots,m$.
For every sufficiently large integer $n$ define a random variable $Y_i^{(n)}$ such that
$$\bP\left(Y_i^{(n)}=a_j\right)=p_j^{(n)} \in \mathbb Q, \quad j = 1, \dots, m$$
where $p_j^{(n)}=\lceil n p_j\rceil /n$ for $j=2,\ldots,m$,
and $p_1^{(n)}=1-\sum_{j=2}^m p_j^{(n)}$ (note that the $p_1^{(n)}$ is positive for $n$ large enough).
Then, for every $j$ we have
$p_j^{(n)}\leq p_j+1/n,$ and therefore
$$\cE(Y_i^{(n)})\leq \cE(X_i)+ m/n.$$
Applying the conclusion of Conjecture 1 to $Y^{(n)}_i$'s  and using the continuity of function $M$, we get
\begin{align*}
	\bP\left(\textstyle \sum_{i=1}^kX_i\geq 1\right) &= \lim_{n \to \infty} \bP\left(\textstyle\sum_{i=1}^kY_i^{(n)}\geq 1\right) %\\
&	\le \lim_{n \to \infty} M(x + m/n) = M(x).
\end{align*}
Let us now assume that the Conjecture~\ref{conjLMS} holds for random variables satisfying (i) and show it then holds for arbitrary $X_i, i = 1, \dots, k$. For every positive integer $m$ define random variables
$$Y_i^{(m)}=\min \{\left\lceil m X_i\right\rceil/{m}, 1\}.$$
Note that $\supp(Y_i^{(m)})$ is finite and contained in $[0,1]$.
We have $Y_i^{(m)}\leq X_i + 1/m$ and therefore $\cE(Y_i^{(m)})\leq \cE(X_i) + 1/m \le x + 1/m$. For sufficiently large $m$ we get 
\begin{align*}
	\bP\left(\textstyle \sum_{i=1}^kX_i\geq 1\right) &\leq \bP \left( \textstyle \sum_{i=1}^k \lceil m X_i \rceil/m \ge 1 \right) = \bP\left(\textstyle \sum_{i=1}^kY_i^{(m)}\geq 1\right) \leq M(x+1/m).
\end{align*}
Taking a limit over $m \to \infty$ and using the fact that $M$ is continuous (from the right), we obtain 
$$\bP\left(\textstyle \sum_{i=1}^kX_i\geq 1\right)\leq M(x).\quad \qed$$
\renewcommand{\qed}{}
\end{proof}

\begin{lemma}\label{lm2}
	For every $k$ and $x \in [0,1/k]$ Conjectures~\ref{conjLMS} and~\ref{conjfrac} are equivalent.
\end{lemma}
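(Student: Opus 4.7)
The plan is to exploit fractional LP duality \eqref{eq:nutau} as a dictionary between fractional vertex covers of size at most $xn$ and i.i.d.\ random variables of mean at most $x$. Each side of the dictionary encodes the same combinatorial datum: a weighting of $n$ ``vertices'' by values in $[0,1]$. The event $X_1+\cdots+X_k\geq 1$ then corresponds to the $k$-subsets on which the cover inequality is satisfied, so that $\bP(\sum X_i\geq 1)$ and $|E|/\binom{n}{k}$ literally agree up to an $O(1/n)$ correction coming from repeated samples.

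For Conjecture~\ref{conjfrac} $\Rightarrow$ Conjecture~\ref{conjLMS}, I invoke Lemma~\ref{lm1} and assume $X_i$ has finite support $\{a_1,\dots,a_m\}\subset[0,1]$ with rational masses $p_j=q_j/N$. For each scaling factor $c$ I would build a hypergraph $H_c$ on $cN$ vertices partitioned into blocks $V_1,\dots,V_m$ of sizes $cq_1,\dots,cq_m$, put $w(v)=a_j$ for $v\in V_j$, and take $E(H_c)$ to be all $k$-subsets $e$ with $\sum_{v\in e}w(v)\geq 1$. By construction $w$ is a fractional vertex cover of size $cN\cdot\cE X_i\leq xcN$, so $\nu^*(H_c)\leq xcN$. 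On the other hand, drawing $k$ vertices i.i.d.\ uniformly from $V$ and reading $X_i=w(v_i)$ reproduces the law of $(X_1,\dots,X_k)$; conditioning on the $O(1/c)$-probable event that the sampled vertices collide gives
\[
	\bP(X_1+\cdots+X_k\geq 1)=\frac{|E(H_c)|}{\binom{cN}{k}}+O(1/c).
\]
Applying Conjecture~\ref{conjfrac} to the sequence $H_c$ and letting $c\to\infty$ yields the bound demanded by Conjecture~\ref{conjLMS}.

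For the converse, suppose $H_n=(V_n,E_n)$ satisfies $\nu^*(H_n)\leq x|V_n|$, and by \eqref{eq:nutau} fix an optimal fractional vertex cover $w_n\colon V_n\to[0,1]$. I would define $X_1^{(n)},\dots,X_k^{(n)}$ i.i.d., each taking value $w_n(v)$ with probability $1/|V_n|$ for every $v\in V_n$. Then $\cE X_i^{(n)}\leq x$, and the event that $(v_1,\dots,v_k)$ is an ordering of some edge $e\in E_n$ has probability $k!|E_n|/|V_n|^k$ and forces $\sum X_i^{(n)}\geq 1$ by the cover property. Hence
\[
	\bP\bigl(\textstyle\sum_i X_i^{(n)}\geq 1\bigr)\;\geq\;\bigl(1-O(1/|V_n|)\bigr)\cdot\frac{|E_n|}{\binom{|V_n|}{k}},
\]
and Conjecture~\ref{conjLMS} caps the left side by $\max\{1-(1-x)^k,(kx)^k\}$; taking $\limsup$ gives Conjecture~\ref{conjfrac}.

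No step is genuinely hard; the only subtlety is the $O(1/n)$ gap between sampling with replacement (natural for i.i.d.\ variables) and sampling $k$-sets without replacement (natural for edges). This slack is precisely why Conjecture~\ref{conjfrac} must be phrased as a $\limsup$ rather than an inequality for each finite hypergraph, and it is also what makes Lemma~\ref{lm1} indispensable: without passing to a discrete, rationally-supported distribution one cannot encode $(X_i)$ as uniform sampling from a finite vertex set whose size may be sent to infinity.
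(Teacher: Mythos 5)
Your proposal is correct and follows essentially the same route as the paper's proof: both directions use the duality $\nu^*=\tau^*$ to pass between fractional vertex covers of size at most $x|V|$ and i.i.d.\ variables of mean at most $x$ obtained by uniform vertex sampling, with the same $O(1/|V|)$ correction for repeated coordinates, and the reverse direction uses Lemma~\ref{lm1} and the same blow-up construction (your common denominator $N$ playing the role of the paper's least common multiple $r$).
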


\begin{proof}
	The proof that Conjecture~\ref{conjLMS} implies Conjecture~\ref{conjfrac} goes along the same lines as the proof of Theorem 2.1 in~\cite{AFHRRS}.
We recall it below for the sake of completeness.

Let us fix $k$ and $x \in [0,1/k]$ and suppose that Conjecture~\ref{conjLMS} holds.
Moreover, let $H_n=(V_n,E_n)$ be a~sequence of $k$-uniform hypergraphs such that $\nu^*(H_n)\leq x|V_n|=xn$. 
By~(\ref{eq:nutau}) we have $\tau^*(H_n)=\nu^*(H_n)\leq xn$, 
hence there exists a weight function $w_n:V_n\rightarrow [0,1]$ such that 
$$\sum_{v\in V_n}{w_n(v)}=xn,$$
and $\sum_{v\in e}{w_n(v)}\geq 1$ for every $e\in E_n$.

Let $(v^{n}_1,\ldots,v^{n}_k) \in V_n^k$ be a~vector of random vertices,
each chosen independently and uniformly over $V_n$.
Note that $w_n(v^{n}_1),\ldots,w_n(v^{n}_k)$ are nonnegative, independent and identically distributed random variables with mean
\begin{equation*}
	\mathbb{E}(w_n(v^{n}_i))=\frac{1}{|V_n|}\sum_{v\in V_n}{w_n(v)}=\frac{1}{n}xn=x.
\end{equation*}
Observe also  that 
\begin{equation}\label{eq1}
\mathbb{P}(\{v^{n}_1,\ldots,v^{n}_k\}\in E_n)=\frac{k!|E_n|}{n^k}.\end{equation}
On the other hand, since $w_n$ is a vertex cover of $H_n$, for $\{v^{n}_1,\ldots,v^{n}_k\} \in E_n$ we have $\sum_{i=1}^k{w_n(v^{n}_i)}\geq 1$ and thus
\begin{equation}\label{eq2}
\mathbb{P}(\{v^{n}_1,\ldots,v^{n}_k\}\in E_n)\leq\bP\left(\textstyle \sum_{i=1}^k{w_n(v^{n}_i)}\geq 1\right).\end{equation}
From (\ref{eq1}), (\ref{eq2}) and 
the assumption that Conjecture~\ref{conjLMS} is true, we conclude that
\begin{equation*}
	\limsup_{n\to \infty} \frac{ |E_n|}{{|V_n| \choose k}} \le \limsup_{n\to \infty}  \bP\left(\textstyle \sum_{i=1}^k{w_n(v^{n}_i)}\geq 1\right)\leq 	\max\left\{1-(1-x)^k, {(kx)^k}\right\}.
\end{equation*}

It remains to prove the reverse implication.
Let us assume that Conjecture~\ref{conjfrac} is valid for some $k$ and $x \in [0,1/k]$.
Due to Lemma~\ref{lm1} it is enough to show that Conjecture~\ref{conjLMS} holds for $X_i$'s attaining a finite set of values $a_1, \dots, a_m \in [0,1]$ such that
$$\mathbb{P}(X_i=a_j)={p_j}/{q_j}, \qquad j = 1, \dots, m$$
for some positive integers $p_j$ and $q_j$. 
Moreover, let $r$ be the smallest common multiple of the numbers $\{q_1,\ldots,q_m\}$, and define integers 
\begin{equation}\label{eq:pprime}
	p'_j = rp_j/q_j, \qquad j = 1, \dots, m. 
\end{equation}

In order to apply Conjecture \ref{conjfrac}, we  define hypergraphs 
with bounded fractional matching number.
For $n=1,2,\ldots$, let $V_n=[nr]$. Observing that $np_1' + \dots + np_m' = nr$, define a function $w_n:V_n\rightarrow[0,1]$ in such a way that for each $j = 1, \dots, m$ function $w_n(v)$ takes value $a_j$ precisely $np_j'$ times.
Let $H_n=(V_n,E_n)$ be a hypergraph with the edge set 
$$E_n=\left\{e\in\binom{V_n}{k}: \sum_{v\in e}{w_n(v)}\geq 1\right\}.$$
In view of  \eqref{eq:pprime}, we have that $w_n$ is a~fractional
vertex cover of $H_n$ of size 
\begin{equation*}
	\sum_{v=1}^{nr}{w_n(v)}=\sum_{j=1}^m{a_j}n{p'_j} = n\sum_{j=1}^m ra_j\frac{p_j}{q_j} = nr \mathbb E (X_i) \le xnr.
\end{equation*}
Hence by \eqref{eq:nutau} we have $\nu^*(H_n)=\tau^*(H_n)\leq xnr$ and therefore  \eqref{eq:limsup} gives
\begin{equation}\label{eq:Ebound}
	\limsup_{n \to \infty}\frac{ |E_n|} {\binom {nr}k}\le \max \left\{ 1-(1-x)^k, (kx)^k \right\}.
\end{equation}

Let $(v^{n}_1,\ldots,v^{n}_k) \in V_n^k$ be a~vector of random vertices,
each chosen independently and uniformly over $V_n$.
Note that for every $n$ the random variable $w_n(v_i^{n})$ has the same distribution as $X_i$, since, by \eqref{eq:pprime}, 
$$\bP(w_n(v_i^{n}) = a_j)=\frac{np_j'}{|V_n|}=\frac{n r p_j/q_j}{nr}=\frac{p_j}{q_j}, \qquad j = 1, \dots, m.$$
Let $N_n$ denote the number of $k$-element vectors $(v_1,\ldots,v_k) \in V_n^k$
of vertices with at least two equal coordinates. We have 
\begin{align*}
\bP(X_1+\ldots +X_k\geq 1)=& \bP(w_n(v^{n}_1) + \dots + w_n(v^{n}_k) \geq 1)\\
=&\frac{\left|\left\{(v_1,\ldots,v_k)\in V_n^{k}: \sum_i{w(v_i)}\geq 1\right\}\right|}{(nr)^k}\\
=& \frac{k!|E_n|+N_n}{(nr)^k} \le \frac{|E_n|}{\binom {nr}k} + \frac{\binom {k} 2 (nr)^{k-1}}{(nr)^k}.
\end{align*}

Taking the limit over $n \to \infty$ and using \eqref{eq:Ebound} we get that
$$\mathbb{P}(X_1+\ldots +X_k\geq 1) \leq \max\{1-(1-x)^k,(kx)^k\}.\quad\qed$$
\renewcommand{\qed}{}
\end{proof}

Now Theorem~\ref{thmLMS} follows from Lemma~\ref{lm2}
and Remark \ref{thm:frac}.

\bibliographystyle{amsplain}

\begin{thebibliography}{9}

\bibitem{AFHRRS} {N.~Alon, P.~Frankl, H.~Huang, V.~R\"{o}dl, A.~Ruci\'nski, B.~Sudakov},
{\em Large matchings in uniform hypergraphs and the conjectures of Erd\H{o}s and Samuels},
{J. Combin. Th.}, {Ser. A}, {\bf 119} (2012), 1200--1215.

\bibitem{E} {P.~Erd\H{o}s},
{\em A problem on independent $r$-tuples}, {Ann. Univ. Sci. Budapest. Etvs Sect. Math.}, {\bf 8} (1965), 93--95.

\bibitem{EG} {P.~Erd\H{o}s and T.~Gallai},
{\em On maximal paths and circuits of graphs}, {Acta Math. Acad. Sci. Hungar.}, {\bf 10} (1959), 337--356.

\bibitem{Fnew} {P.~Frankl}, {\em On the maximum number of edges in a~hypergraph with a~given matching number}, {\tt arXiv:1205.6847}.

\bibitem{Fgen} {P.~Frankl}, {\em Improved bounds for Erd\H{o}s' Matching Conjecture}, {J. Combin. Th.}, {Ser. A}, {\bf 120} (2013), 1068--1072.

\bibitem{HS} {W.~Hoeffding and S.S.~Shrikhande}, {\em Bounds for the distribution function of a~sum of independent, identically distributed random variables}, {Ann. Math. Statist.}, {\bf 26} (1955), 439--449.

\bibitem{LM} {T.~{\L}uczak, K.~Mieczkowska}, {\em On Erd\H{o}s' extremal problem on matchings in hypergraphs}, {J.~Combin. Th.}, {Ser. A}, {\bf 124} (2014), 178--194.

\bibitem{S66}
S.~M. Samuels,
{\em On a {C}hebyshev-type inequality for sums of independent random
  variables},
 { Ann. Math. Statist.}, {\bf 37} (1966), 248--259.

\bibitem{S68}
{S.~M. Samuels},
{\em  More on a {C}hebyshev-type inequality for sums of independent random
  variables},
 { Purdue Stat. Dept. Mimeo. Ser.}, {\bf 155} (1968).

\bibitem{S69} {S.~M.~Samuels}, {\em The Markov inequality for sums of independent random variables}, {Ann. Math. Statist.}, {\bf 40} (1969), 1980--1984.

\end{thebibliography}

\end{document}